\pdfoutput=1
\RequirePackage{ifpdf}
\ifpdf 
\documentclass[pdftex]{sigma}
\else
\documentclass{sigma}
\fi

\numberwithin{equation}{section}

\newtheorem{Theorem}{Theorem}[section]
\newtheorem*{Theorem*}{Theorem}

\newtheorem{Proposition}[Theorem]{Proposition}
\newtheorem{problem}[Theorem]{Problem}
\newtheorem{spec}[Theorem]{Speculation}
 { \theoremstyle{definition}

}

\newcommand\qbin[3]{{\left[\begin{matrix} #1 \\ #2 \end{matrix} \right]_{#3}}}

\begin{document}

\renewcommand{\thefootnote}{}

\renewcommand{\PaperNumber}{049}

\FirstPageHeading

\ShortArticleName{The Rogers--Ramanujan Identities and Cauchy's Identity}

\ArticleName{The Rogers--Ramanujan Identities\\ and Cauchy's Identity\footnote{This paper is a~contribution to the Special Issue on Basic Hypergeometric Series Associated with Root Systems and Applications in honor of Stephen C.~Milne's 75th birthday. The~full collection is available at \href{https://www.emis.de/journals/SIGMA/Milne.html}{https://www.emis.de/journals/SIGMA/Milne.html}}}

\Author{Dennis STANTON}

\AuthorNameForHeading{D.~Stanton}

\Address{School of Mathematics, University of Minnesota, Minneapolis, Minnesota 55455, USA}
\Email{\href{mailto:stanton@math.umn.edu}{stanton@math.umn.edu}}
\URLaddress{\url{https://www-users.cse.umn.edu/~stant001/}}

\ArticleDates{Received February 26, 2025, in final form June 26, 2025; Published online July 01, 2025}

\Abstract{The Rogers--Ramanujan identities are investigated using the Cauchy identity for Schur functions.}

\Keywords{integer partition; Schur function}

\Classification{05A17; 05A19}

\begin{flushright}
\begin{minipage}{46mm}
\it Dedicated to Stephen Milne\\ for his 75th birthday
\end{minipage}
\end{flushright}

\renewcommand{\thefootnote}{\arabic{footnote}}
\setcounter{footnote}{0}

\section{Introduction}

Two of Steve Milne's most noteworthy works are on the Rogers--Ramanujan identities
 (see \cite{And2})
 \begin{gather}
 \label{RR}
 \sum_{n=0}^\infty \frac{q^{n^2}}{(q;q)_n}= \frac{1}{\bigl(q^1;q^5\bigr)_\infty \bigl(q^4;q^5\bigr)_\infty},
 \qquad \sum_{n=0}^\infty \frac{q^{n^2+n}}{(q;q)_n}= \frac{1}{\bigl(q^2;q^5\bigr)_\infty \bigl(q^3;q^5\bigr)_\infty}.
 \end{gather}
 With J.~Lepowsky he proved \eqref{RR} algebraically (see \cite{LM1,LM2}).
 The involution principle, with A.~Garsia \cite{GM}, gave an indirect bijection for MacMahon's
 combinatorial interpretation of the identities.

Stembridge \cite{JS} used symmetric function identities via Hall--Littlewood polynomials
to prove and generalize the Rogers--Ramanujan identities.
This was continued by Jouhet--Zeng~\cite{JZ} and S.~Ole Warnaar~\cite{OW1}.
A vast generalization to the Rogers--Ramanujan identities, corresponding to affine
Lie algebras, was given in~\cite{GOW,RW}. Here the appropriate Hall--Littlewood
polynomials are specializations of the Macdonald--Koornwinder polynomials.

The purpose of this note is explore a naive approach using the Cauchy
identity for Schur functions. What would be required for a explicit bijective proof via the
Cauchy identity is discussed in Section \ref{sec2}. Some related identities and a speculation are
given in Sections \ref{sec3} and \ref{sec4}, while Section \ref{sec5} has two remarks.

All symmetric function facts can be found in Macdonald's book~\cite{Mac}.

\section{A proposal for a bijection}\label{sec2}

MacMahon's combinatorial interpretation of \eqref{RR} uses integer partitions.
\begin{Proposition}
The first Rogers--Ramanujan identity is equivalent to the following two sets of integer
partitions being equinumerous for any $n$:
\begin{enumerate}\itemsep=0pt
\item[$(1)$] integer partitions of $n$ into parts congruent to $1$ or $4$ modulo $5$,
\item[$(2)$] integer partitions of $n$ whose parts differ by at least $2$.
\end{enumerate}
\end{Proposition}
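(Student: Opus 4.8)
The plan is to identify each side of the first identity in \eqref{RR} as a generating function, one for each partition class, so that the claimed equinumerosity follows by comparing coefficients of $q^n$. First I would expand the product on the right. Writing
\[
\frac{1}{\bigl(q^1;q^5\bigr)_\infty\bigl(q^4;q^5\bigr)_\infty}
=\prod_{k\ge 0}\frac{1}{\bigl(1-q^{5k+1}\bigr)\bigl(1-q^{5k+4}\bigr)}
\]
and substituting the geometric series $1/(1-q^j)=\sum_{m\ge 0}q^{jm}$ into each factor, the coefficient of $q^n$ records the number of ways to write $n$ as an unordered sum of parts each congruent to $1$ or $4$ modulo $5$. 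This is precisely the generating function for class~(1), so that side is settled directly.

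The more substantial half is the sum on the left. I would argue that each summand $q^{n^2}/(q;q)_n$ is the generating function for partitions of $n$ into \emph{exactly} $n$ parts that differ by at least $2$. The key point is that the minimal such partition is $(2n-1,2n-3,\dots,3,1)$, whose size is $1+3+\cdots+(2n-1)=n^2$. Given any partition $\lambda_1>\lambda_2>\cdots>\lambda_n\ge 1$ with $\lambda_i-\lambda_{i+1}\ge 2$, the shift $\mu_i=\lambda_i-(2(n-i)+1)$ produces an ordinary partition $\mu_1\ge\mu_2\ge\cdots\ge\mu_n\ge 0$ having at most $n$ parts, and this correspondence is reversible. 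Since $|\lambda|=n^2+|\mu|$ and $1/(q;q)_n$ is the generating function for partitions into at most $n$ parts, summing $q^{n^2}/(q;q)_n$ over $n\ge 0$ yields the generating function for class~(2).

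With both identifications established, the identity \eqref{RR} forces the coefficient of $q^n$ on the two sides to agree for every $n$, which is exactly the assertion that classes~(1) and~(2) are equinumerous. I expect the main obstacle to be the careful verification that the shift $\lambda_i\mapsto\mu_i$ is a genuine bijection onto partitions with at most $n$ parts: one must confirm that the gap condition $\lambda_i-\lambda_{i+1}\ge 2$ corresponds exactly to the weak inequality $\mu_i\ge\mu_{i+1}$ and that $\lambda_n\ge 1$ corresponds to $\mu_n\ge 0$, so that no partition is omitted or counted twice.
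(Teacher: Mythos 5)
Your proof is correct and is exactly the classical argument the paper implicitly relies on: the paper states this as MacMahon's combinatorial interpretation without giving a proof, and the standard justification is precisely your identification of the product side with class~(1) via geometric series and of $q^{n^2}/(q;q)_n$ with partitions into exactly $n$ parts differing by at least $2$ via the staircase shift $\mu_i=\lambda_i-(2(n-i)+1)$. The bijection details you flag as the main obstacle do check out: $\lambda_i-\lambda_{i+1}\ge 2$ is equivalent to $\mu_i\ge\mu_{i+1}$ and $\lambda_n\ge 1$ to $\mu_n\ge 0$, so nothing further is needed.
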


There is no known direct bijection between these two finite sets of partitions.
There is a~similar statement for the second Rogers--Ramanujan identity, also with
an unknown bijection.

In this paper, we use Schur functions, $s_\lambda(x_1,\dots, x_n,\dots),$
which are symmetric functions in variables
$x_1,x_2,\dots$ indexed by integer partitions $\lambda.$
A Schur function indexed by $\lambda$ is the generating function of all column strict
tableaux $P$ of shape $\lambda$. For example, if $\lambda=(4,2,1)$, one such~$P$~is\looseness=-1
\begin{gather*}
\begin{matrix}
&1&1&3&6\\
P=&3&3&&\\
&5&&&
\end{matrix}
\end{gather*}
whose weight is $x_1^2x_3^3x_5^1x_6^1.$ In this paper, the weights are
always powers of $q$, so the weight of a~column strict tableaux $P$ is
$q^N$, where $N$ is the sum of the entries of $P$. Suppose the number of variables
is finite, for example $R$ variables.
There are only $R$ possible choices for entries in the first column, so if the
indexing partition $\lambda$ has more than $R$ parts, the Schur function is zero.
We will later use $R=2$ case so that $\lambda$ has at most 2 rows.
The Schur function indexed by the empty partition is 1.

The Cauchy identity for Schur functions $s_\lambda(x_1,\dots, x_n,\dots)$
provides a start for a Rogers--Ramanujan bijection.
The Cauchy identity is
\begin{gather}
\label{Cauchy}
\sum_{\lambda} s_\lambda(x_1,\dots, x_n,\dots) s_\lambda(y_1,\dots, y_m,\dots)=
\prod_{i,j} (1-x_iy_j)^{-1}.
\end{gather}
Moreover, it is known that the Robinson--Schensted--Knuth correspondence
 is a direct bijection for~\eqref{Cauchy}, see \cite[Chapter~11.3]{And2}.

Choose
$
(x_1,\dots, x_n,\dots)= \bigl(1,q^5,q^{10}, q^{15},\dots\bigr)$, $ (y_1,y_2)=\bigl(q^1,q^4\bigr)$
so that the right side of~\eqref{Cauchy} is the product side of the first
Rogers--Ramanujan identity
\smash{$
\frac{1}{(q;q^5)_\infty (q^4;q^5)_\infty}$},
while the left side is restricted to partitions with at most two rows
\[
\sum_{\lambda{\text { at most 2 rows}} }
s_\lambda\bigl(1,q^5,q^{10}, q^{15},\dots\bigr) s_\lambda\bigl(q^1,q^4\bigr).
\]

\begin{Proposition}\label{RSK}
The Robinson--Schensted--Knuth correspondence provides a direct
bijection between
\begin{enumerate}\itemsep=0pt
\item[$(1)$] integer partitions of $n$ into parts congruent to $1$ or $4$ modulo $5$,
\item[$(2)$] pairs of column strict tableaux $(P,Q)$ of the same shape with at most two rows,
$P$ having entries congruent to $0$ modulo $5$, $Q$ having entries $1$ or $4$,
whose sum of entries is $n$.
\end{enumerate}
\end{Proposition}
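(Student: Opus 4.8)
The plan is to reduce Proposition \ref{RSK} to the matrix form of the RSK correspondence, which (as recalled just before the statement) is already the combinatorial bijection underlying the Cauchy identity \eqref{Cauchy}. First I would record RSK in its matrix guise: for the alphabets indexed by the $x$- and $y$-variables, RSK is a weight-preserving bijection $A\mapsto(P,Q)$ between arrays $A=(a_{ij})_{i\ge 1,\,1\le j\le 2}$ of nonnegative integers and pairs of column-strict tableaux of a common shape, where $P$ is filled from the $x$-alphabet and $Q$ from the $y$-alphabet, satisfying $x^P y^Q=\prod_{i,j}(x_iy_j)^{a_{ij}}$. In particular, the number of entries equal to $i$ in $P$ is the $i$-th row sum of $A$, and the number of entries equal to $j$ in $Q$ is the $j$-th column sum.

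Next I would insert the specialization $x_i=q^{5(i-1)}$ and $(y_1,y_2)=(q,q^4)$. Because there are only two $y$-variables, every column of a column-strict tableau on the $y$-alphabet has at most two cells, so both $Q$ and, having the same shape, $P$ have at most two rows; this produces exactly set $(2)$. Relabeling each tableau entry by the exponent of $q$ it contributes turns $x^P y^Q$ into $q^{(\text{sum of entries of }P)+(\text{sum of entries of }Q)}$: an entry $i$ in $P$ becomes $5(i-1)$, hence $\equiv 0\pmod 5$, and the entries $1,2$ in $Q$ become $1,4$, matching the stated labeling and making the common degree equal to $n$.

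The key remaining step is to identify the array $A$ with set $(1)$. Reading $A$ as a multiplicity vector, the entry $a_{ij}$ records how many times the part $5(i-1)+e_j$ occurs, where $e_1=1$ and $e_2=4$. The assignment $(i,j)\mapsto 5(i-1)+e_j$ is a bijection from $\{i\ge 1\}\times\{1,2\}$ onto the set of positive integers congruent to $1$ or $4$ modulo $5$: the residues $1$ and $4$ are distinct, and each such integer $m$ has a unique preimage. Hence arrays $A$ with $\sum_{i,j}(5(i-1)+e_j)\,a_{ij}=n$ are precisely the partitions of $n$ into parts $\equiv 1,4\pmod 5$, which is set $(1)$. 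Composing this identification with RSK gives the desired bijection.

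I expect the only real work to be bookkeeping rather than a genuine obstacle: one must keep straight which content (row versus column sums) attaches to which tableau and confirm that the two relabelings are mutually consistent, so that the weights agree term by term. The single conceptual point is the bijection $(i,j)\mapsto 5(i-1)+e_j$ onto the residue classes $1,4\pmod 5$; everything else rests on the already-granted fact that RSK is a weight-preserving bijection for \eqref{Cauchy}.
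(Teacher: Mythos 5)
Your proposal is correct and follows essentially the same route as the paper, which states the proposition as an immediate consequence of specializing the Cauchy identity \eqref{Cauchy} at $(x_1,x_2,\dots)=\bigl(1,q^5,q^{10},\dots\bigr)$, $(y_1,y_2)=\bigl(q,q^4\bigr)$ and invoking RSK as the bijective proof of \eqref{Cauchy}; the paper leaves the details implicit, and your write-up simply makes explicit the identification of the RSK matrix with a partition into parts $\equiv 1,4\pmod 5$ via $(i,j)\mapsto 5(i-1)+e_j$ and the two-row bound coming from the two-letter $y$-alphabet.
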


Proposition~\ref{RSK} offers some advantages and disadvantages for a bijection.
On the plus side, it changes the problem to a problem on tableaux, for which there is a
well developed machinery of bijections. These more refined objects may be easier to sort than
integer partitions. Conversely, the simple answer required, partitions whose parts
differ by at least two, may not be apparent from this detailed view.
The $\lambda=\varnothing$ term corresponds to the $n=0$ term
in the sum side of the first Rogers--Ramanujan identity, namely 1.
Here are the pairs of column strict tableaux $(P,Q)$ which correspond to the $n=1$
term in the sum side of the first Rogers--Ramanujan identity
\begin{gather*}
\frac{q^1}{1-q}=\frac{q^1+q^2+q^3+q^4+q^5}{1-q^5}.
\end{gather*}
If $\lambda=1$, the possible choices for $(P,Q)$ are
$(x,1)$, $(x,4)$, where $x$ is a multiple $5$.
Their generating function is
\smash{$
\frac{q^1+q^4}{1-q^5}$}.
For $\lambda=2$, we may take
$((0,x), (1,1))$, $((0,x),(1,4))$ where~$x$ is a multiple $5$,
whose generating function is
\smash{$\frac{q^2+q^5}{1-q^5}$}.
For the remaining term, we take $\lambda=3$,
$((0,0,x), (1,1,1))$, where~$x$ is a multiple~$5$.
whose generating function is \smash{$\frac{q^3}{1-q^5}$}.

\begin{table}[ht]\centering\renewcommand{\arraystretch}{1.15}
\caption{Column strict pairs $(P,Q)$ for $n=1$.}\label{table:first}
\vspace{1mm}

\begin{tabular}{c c c c}
\hline
$\lambda $& $(P,Q)$ &generating function\\
\hline
1&($(x)$, $(1)$ or $(4)$) & $\bigl(q^1+q^4\bigr)/\bigl(1-q^5\bigr)$&\\
2&($(0,x)$, $(1,1)$ or $(1,4)$) &$\bigl(q^2+q^5\bigr)/\bigl(1-q^5\bigr)$\\
3&($(0,0,x)$, $(1,1,1)$) & $q^3/\bigl(1-q^5\bigr)$\\
\end{tabular}
\end{table}

The $n=2$ term on the sum side is
\begin{gather*}
\frac{q^4}{(1-q)\bigl(1-q^2\bigr)}\\
\qquad=
\frac{q^4 + q^5 + 2q^6 + 2 q^7 + 3 q^8 + 2 q^9 + 3 q^{10} + 2 q^{11} +
 3 q^{12} + 2 q^{13} + 2 q^{14} + q^{15} + q^{16}}{\bigl(1-q^5\bigr)\bigl(1-q^{10}\bigr)}.
\end{gather*}

We list, in Table~\ref{table:second}, 25 classes of pairs $(P,Q)$ which correspond to these 25 numerator terms.
Each class has a generating
function of $q^A/\bigl(1-q^5\bigr)\bigl(1-q^{10}\bigr)$, for an $A$ between $4$ and $16$. The denominator factors occur
because the generating function for partitions with at most 2 parts, each part a multiple of 5, is
$1/\bigl(1-q^5\bigr)\bigl(1-q^{10}\bigr).$

\begin{table}[ht]\centering\renewcommand{\arraystretch}{1.15}
\caption{Column strict pairs $(P,Q)$ for $n=2$.}\label{table:second}\vspace{1mm}

\begin{tabular}{@{\,}c@{\,}c@{\,}c@{\,}c@{\,}}
\hline
$\lambda $& $(P,Q)$ &$A$\\
\hline
2&($(y,x)$, $(1,1)$ or $(1,4)$) $y\ge 5$ & 12, 15\\
2&($(y,x)$, $(4,4)$) & 8\\
3&($(0,y,x)$, $(1,1,4)$) or $(1,4,4)$ or $(4,4,4)$ &6, 9, 12\\
4&($(0,0,y,x)$, $(1,1,1,1)$ or $(1,1,1,4)$ or $(1,1,4,4)$ or $(1,4,4,4)$ & 4, 7, 10, 13, 16\\
 & or $(4,4,4,4)$) &\\
5&($(0,0,0,y,x)$, $(1,1,1,1,1)$ or $(1,1,1,1,4)$ or $(1,1,1,4,4)$ or $(1,1,4,4,4)$) &5, 8, 11, 14\\
6&($(0,0,0,0,y,x)$, $(1,1,1,1,1,1)$ or $(1,1,1,1,1,4)$ or $(1,1,1,1,4,4)$) &6, 9, 12\\
7&($(0,0,0,0,0,y,x)$, $(1,1,1,1,1,1,1)$ or $(1,1,1,1,1,1,4)$ &7, 10, 13\\
 & or $(1,1,1,1,1,4,4)$)&\\
8& ($(0,0,0,0,0,0,y,x)$, $(1,1,1,1,1,1,1,1)$ or $(1,1,1,1,1,1,1,4)$ &8, 11, 14\\
& or $(1,1,1,1,1,1,4,4)$) &\\
$(1,1)$&(transpose$(y,x)$, $x> y$, transpose$(1,4)$)&10
\end{tabular}
\end{table}

In general, we want to obtain the $n$-th term in the Rogers--Ramanujan sum
\[
\sum_{n=0}^\infty \frac{q^{n^2}}{(q;q)_n}= 1+\frac{q}{1-q}+\frac{q^4}{(1-q)\bigl(1-q^2\bigr)}+\cdots =\sum_{n=0}^\infty \frac{q^{n^2}}{\bigl(q^5;q^5\bigr)_n} \prod_{j=1}^n \sum_{p=0}^4 q^{jp}.
\]

\begin{problem}\label{mainprob}
Can one choose pairs of column strict tableaux $(P,Q)$ of the same shape such that\looseness=-1
\begin{enumerate}\itemsep=0pt
\item[$(1)$] the entries of $P$ are multiples of 5,
\item[$(2)$] the entries of $Q$ are $1$ and $4$,
\item[$(3)$] and whose generating function is
\[
F_n(q)= q^{n^2}\left( \prod_{j=1}^n \sum_{p=0}^4 q^{jp}\right)/\bigl(q^5;q^5\bigr)_n?
\]
\end{enumerate}
\end{problem}
Solving Problem~\ref{mainprob} gives a Rogers--Ramanujan bijection
when combined with Proposition~\ref{RSK}. The pair of column strict tableaux
$(P,Q)$ correspond to an integer partition $\mu$ whose parts are $1$ or $4$ modulo $5$.
But they also correspond to an integer partition $\lambda$ whose parts differ by two. For example,
the $n=2$ term has the generating function
\begin{gather}
\label{mod5}
q^4 \left(\frac{1+q+q^2+q^3+q^4}{1-q^5} \right)
\left( \frac{1+q^2+q^4+q^6+q^8}{1-q^{10}}\right).
\end{gather}
This means, after subtracting $1$ from the second part of $\lambda$ and $3$ from the first part of
$\lambda$, the resulting columns have length $1$ or $2$. The 5 terms in the numerator
factors of \eqref{mod5} are the mod~5 values of the multiplicities of $1$ and $2.$
It will take substantially more insight to resolve Problem~\ref{mainprob} for an arbitrary $n$.

\section{Formulas}\label{sec3}

For clarity, here are the explicit generating functions of the Schur functions as products.
These follow from the principle specialization of Schur functions, the hook-content formula.
\begin{Proposition}
Let $\lambda=(a+b,a)$. Then
\begin{gather*}
s_\lambda\bigl(1,q^5,q^{10}, q^{15},\dots\bigr) = \frac{q^{5a}}{\bigl(q^5;q^5\bigr)_a \bigl(q^5;q^5\bigr)_b \bigl(q^{5(b+2)};q^5\bigr)_a},\qquad
s_\lambda\bigl(q^1,q^4\bigr)= q^{5a+b}\sum_{k=0}^b q^{3k}.
\end{gather*}
\end{Proposition}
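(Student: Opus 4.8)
The plan is to handle the two evaluations separately, as each invokes a different standard specialization of Schur functions.

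For the first formula I would apply the principal specialization in infinitely many variables, which is the limiting case of the hook-content formula as the number of variables tends to infinity:
\[
s_\lambda\bigl(1,t,t^2,\dots\bigr)= t^{n(\lambda)} \prod_{(i,j)\in\lambda} \frac{1}{1-t^{h(i,j)}},
\]
where $n(\lambda)=\sum_i (i-1)\lambda_i$ and $h(i,j)$ is the hook length at the cell $(i,j)$. Setting $t=q^5$ and $\lambda=(a+b,a)$ gives $n(\lambda)=a$, which accounts for the prefactor $q^{5a}$. The substantive step is to enumerate the hook lengths of the two-row shape. The $a$ cells of the second row give hooks $1,2,\dots,a$; the $b$ cells of the first row lying strictly to the right of the second row give hooks $1,2,\dots,b$; and the first $a$ cells of the first row, each sitting directly above a cell of the second row, give hooks $b+2,b+3,\dots,a+b+1$. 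Grouping these three blocks converts $\prod_{(i,j)}\bigl(1-q^{5h(i,j)}\bigr)$ into $\bigl(q^5;q^5\bigr)_a\bigl(q^5;q^5\bigr)_b\bigl(q^{5(b+2)};q^5\bigr)_a$, which is precisely the asserted denominator.

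For the second formula I would use the two-variable bialternant (Weyl character) formula. With variables $y_1,y_2$ and $\delta=(1,0)$,
\[
s_{(a+b,a)}(y_1,y_2)=\frac{y_1^{a+b+1}y_2^{a}-y_1^{a}y_2^{a+b+1}}{y_1-y_2}=(y_1y_2)^a\,\frac{y_1^{b+1}-y_2^{b+1}}{y_1-y_2}=(y_1y_2)^a\sum_{k=0}^{b}y_1^{k}y_2^{b-k}.
\]
Substituting $y_1=q^1$, $y_2=q^4$ makes $(y_1y_2)^a=q^{5a}$, while the reindexing $k\mapsto b-k$ turns the final sum into $q^{b}\sum_{k=0}^{b}q^{3k}$; together these give $q^{5a+b}\sum_{k=0}^{b}q^{3k}$.

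Neither evaluation is genuinely hard once the correct specialization is in hand; the only place demanding care is the hook-length bookkeeping in the first formula. In particular, the block of hooks sitting above the second row must be identified as $b+2,\dots,a+b+1$ rather than $b+1,\dots,a+b$, so that the shifted factor $\bigl(q^{5(b+2)};q^5\bigr)_a$ emerges with the correct exponent; this off-by-one from the leg contribution is the easiest point to slip on.
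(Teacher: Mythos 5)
Your proof is correct, and it follows the route the paper itself indicates (the paper offers no written proof beyond citing the hook-content/principal specialization): the hook-length bookkeeping for the two-row shape, including the block $b+2,\dots,a+b+1$ above the second row and the prefactor $t^{n(\lambda)}=q^{5a}$, all checks out. For the second evaluation your bialternant computation is a standard and correct substitute for directly enumerating the two-row column-strict tableaux on a two-letter alphabet, so nothing further is needed.
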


There is a weighted version using two new parameters $x$ and $y$.
\begin{Theorem}
\label{xyRR}
Choosing $y_1=xq^1$, $y_2=yq^4$, we have
\[
\frac{1}{\bigl(xq;q^5\bigr)_\infty \bigl(yq^4;q^5\bigr)_\infty}=\sum_{a,b\ge 0}
\frac{q^{5a}}{\bigl(q^5;q^5\bigr)_a \bigl(q^5;q^5\bigr)_b \bigl(q^{5(b+2)};q^5\bigr)_a}
x^ay^a q^{5a+b}\sum_{k=0}^b x^{b-k}y^kq^{3k}.
\]
\end{Theorem}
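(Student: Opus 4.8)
The plan is to substitute the specialization directly into the Cauchy identity \eqref{Cauchy} and evaluate each of the two Schur functions explicitly. Taking $(x_1,x_2,\dots)=\bigl(1,q^5,q^{10},\dots\bigr)$ and $(y_1,y_2)=\bigl(xq,yq^4\bigr)$, the products $x_iy_1$ and $x_iy_2$ run through $xq^{5i-4}$ and $yq^{5i-1}$ for $i\ge 1$, so the product side of \eqref{Cauchy} becomes
\[
\prod_{i\ge 1}\bigl(1-xq^{5i-4}\bigr)^{-1}\bigl(1-yq^{5i-1}\bigr)^{-1}=\frac{1}{\bigl(xq;q^5\bigr)_\infty\bigl(yq^4;q^5\bigr)_\infty},
\]
which is the left-hand side of the claimed identity. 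It then remains to check that the sum side of \eqref{Cauchy} reproduces the right-hand side.

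Since there are only two $y$-variables, $s_\lambda\bigl(xq,yq^4\bigr)=0$ whenever $\lambda$ has more than two rows, so only partitions $\lambda=(a+b,a)$ with $a,b\ge 0$ contribute to the sum. For the first factor I would invoke the principal specialization formula of the preceding Proposition, namely
\[
s_\lambda\bigl(1,q^5,q^{10},\dots\bigr)=\frac{q^{5a}}{\bigl(q^5;q^5\bigr)_a\bigl(q^5;q^5\bigr)_b\bigl(q^{5(b+2)};q^5\bigr)_a}.
\]

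Next I would compute the two-variable Schur function $s_{(a+b,a)}(u,v)$. By the bialternant formula, or combinatorially by observing that a column strict filling of shape $(a+b,a)$ with entries in $\{1,2\}$ forces each of the first $a$ columns to carry $1$ atop $2$, while the remaining length-$b$ segment of the first row must read $1^{b-k}2^k$ for some $0\le k\le b$, one obtains
\[
s_{(a+b,a)}(u,v)=u^av^a\sum_{k=0}^b u^{b-k}v^k.
\]
Substituting $u=xq$ and $v=yq^4$ gives the factor $x^ay^aq^{5a+b}\sum_{k=0}^b x^{b-k}y^kq^{3k}$, which at $x=y=1$ recovers the second formula of the preceding Proposition. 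Multiplying the two factors and summing over $a,b\ge 0$ then matches the right-hand side term by term, completing the argument.

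The computation carries no serious obstacle; the only points needing care are verifying that the $q$-exponents on the product side collapse precisely to the two $q^5$-Pochhammer symbols, and confirming the two-variable Schur evaluation. Both are routine, and the reduction at $x=y=1$ to the first Rogers--Ramanujan product provides a reassuring check that no factor has been dropped.
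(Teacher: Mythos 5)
Your proposal is correct and follows exactly the route the paper intends: specialize the Cauchy identity with $y_1=xq$, $y_2=yq^4$, use the preceding Proposition's principal specialization for $s_{(a+b,a)}\bigl(1,q^5,q^{10},\dots\bigr)$, and evaluate the two-variable Schur function $s_{(a+b,a)}(u,v)=(uv)^a\sum_{k=0}^b u^{b-k}v^k$. The paper also remarks that the theorem "independently follows" from a finite $q$-binomial identity, but that is an alternative verification, not the primary derivation, so your argument matches the paper's approach.
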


Theorem~\ref{xyRR} independently follows from the finite identity
\[
\sum_{a=0}^M \qbin{N}{a}{q} \bigl(q^a-q^{N-a}\bigr)= \frac{(q;q)_N}{(q;q)_M (q;q)_{N-M-1}}
\qquad \text{for $0\le M\le N-1$}.
\]
Finally, a simple subclass of $(P,Q)$ has a product formula.
A proof of a more general result is given in Theorem~\ref{genthm}.
\begin{Proposition} We have
\begin{gather}
\sum_{\lambda{\text { at most $1$ row}} }
s_\lambda\bigl(1,q^5,q^{10}, q^{15},\dots\bigr) s_\lambda\bigl(q^1,q^4\bigr)= \frac{1}{1-q^3}
\biggl( \frac{1}{\bigl(q^1;q^5\bigr)_\infty}-\frac{q^3}{\bigl(q^4;q^5\bigr)_\infty}\biggr).\label{RR5-1row}
\end{gather}
\end{Proposition}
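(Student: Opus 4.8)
The plan is to reduce everything to a single-index sum and then recognize two copies of Euler's sum identity. First I would observe that a partition with at most one row is just $\lambda=(b)$ for $b\ge 0$, which is the $a=0$ case of the preceding Proposition giving the product formulas for $s_\lambda$. Setting $a=0$ there collapses both $\bigl(q^5;q^5\bigr)_a$ and $\bigl(q^{5(b+2)};q^5\bigr)_a$ to empty products and yields immediately
\[
s_\lambda\bigl(1,q^5,q^{10},\dots\bigr)=\frac{1}{\bigl(q^5;q^5\bigr)_b},\qquad
s_\lambda\bigl(q^1,q^4\bigr)=q^b\sum_{k=0}^b q^{3k},
\]
so the left side of~\eqref{RR5-1row} becomes the double sum $\sum_{b\ge0}\frac{q^b}{\bigl(q^5;q^5\bigr)_b}\sum_{k=0}^b q^{3k}$.

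Next I would evaluate the inner geometric sum, $\sum_{k=0}^b q^{3k}=\bigl(1-q^{3b+3}\bigr)/\bigl(1-q^3\bigr)$, and pull the factor $1/\bigl(1-q^3\bigr)$ outside. This splits the double sum into the difference of two single sums,
\[
\frac{1}{1-q^3}\left(\sum_{b\ge0}\frac{q^b}{\bigl(q^5;q^5\bigr)_b}-q^3\sum_{b\ge0}\frac{q^{4b}}{\bigl(q^5;q^5\bigr)_b}\right),
\]
where in the second sum I have combined $q^b\cdot q^{3b}=q^{4b}$ and pulled the leftover $q^3$ out front.

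Finally I would apply Euler's identity $\sum_{b\ge0} z^b/\bigl(q^5;q^5\bigr)_b=1/\bigl(z;q^5\bigr)_\infty$ (the base-$q^5$ specialization of $\sum_n z^n/(q;q)_n=1/(z;q)_\infty$) with $z=q$ and $z=q^4$ respectively. This turns the two sums into $1/\bigl(q;q^5\bigr)_\infty$ and $1/\bigl(q^4;q^5\bigr)_\infty$, reproducing exactly the right side of~\eqref{RR5-1row}. Since every step is a direct substitution or a standard summation, there is no genuine obstacle here, and this elementary route is more self-contained than invoking the general Theorem~\ref{genthm}; the only point requiring a little care is the bookkeeping of the powers of $q$ when the geometric sum is split, so that the shift $q^b\cdot q^{3b+3}=q^{4b+3}$ is recorded correctly before Euler's identity is applied.
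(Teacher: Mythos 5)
Your proof is correct and is essentially the paper's own argument: the paper derives \eqref{RR5-1row} as the $k=1$, $i=2$ case of Theorem~\ref{genthm}, whose proof likewise writes $s_{(b)}\bigl(q,q^4\bigr)$ as $A_1q^{b}+A_4q^{4b}$ (via partial fractions on $\bigl((1-tq)\bigl(1-tq^4\bigr)\bigr)^{-1}$, which is the same computation as your closed form for the geometric sum, giving $A_1=1/\bigl(1-q^3\bigr)$ and $A_4=-q^3/\bigl(1-q^3\bigr)$) and then applies Euler's identity $\sum_{b\ge 0} q^{pb}/\bigl(q^5;q^5\bigr)_b=1/\bigl(q^p;q^5\bigr)_\infty$. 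Your version is simply the specialization of that argument to this particular case, and all the bookkeeping checks out.
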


\section[Rogers--Ramanujan mod 2k+3]{Rogers--Ramanujan mod $\boldsymbol{ 2k+3}$}\label{sec4}

The same steps as in Section~\ref{sec2} can be done for higher
moduli $2k+3$, the integer partitions whose parts avoid
$\pm i$ and $0$ mod $2k+3,$ $1\le i\le 2k+2.$
Set
\begin{gather}
(x_1,\dots, x_n,\dots) = \bigl(1,q^{2k+3},q^{2(2k+3)}, q^{3(2k+3)},\dots\bigr),\nonumber\\
 (y_1,\dots, y_{2k}) =\bigl(q^1,\dots, q^{2k+2}\bigr)\qquad
{\text{with $q^i$ and $q^{2k+3-i}$ deleted}},\label{defparams}
\end{gather}
so that
\begin{gather*}
\sum_{\lambda{\text { at most $2k$ rows}} }
s_\lambda(x_1,\dots, x_n,\dots) s_\lambda (y_1,\dots, y_{2k})=
\prod_{\underset{j\not\equiv \pm i,0\bmod 2k+3}{j=1}}^\infty \bigl(1-q^j\bigr)^{-1}.
\end{gather*}
The product side of the Rogers--Ramanujan identities \eqref{RR} are the
$k=1$ and $i=2,1$ special cases.

There is always a version of the subclass formula \eqref{RR5-1row} as a sum of
infinite products using~\eqref{defparams}.

\begin{Theorem}
\label{genthm}
Let $k\ge 1$, $1\le i\le 2k+2$ and
$
(y_1,\dots, y_{2k}) =\bigl(q^1,\dots, q^{2k+2}\bigr)$,
with $q^i$ and~$q^{2k+3-i}$ deleted.
Then
\begin{gather*}
\sum_{\lambda{\text { at most $1$ row}} }
s_\lambda\bigl(1,q^{2k+3},q^{2(2k+3)}, q^{3(2k+3)},\dots\bigr) s_\lambda(y_1,\dots, y_{2k})=\sum_{\underset{p\neq i,2k+3-i}{p=1}}^{2k+2} \frac{A_p}{\bigl(q^p;q^{2k+3}\bigr)_\infty} ,
\end{gather*}
where
\begin{gather*}
A_p= \prod_{\underset{j\neq p, i,2k+3-i}{j=1}}^{2k+2}\frac{1}{1-q^{-p+j}}.
\end{gather*}
\end{Theorem}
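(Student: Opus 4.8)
The plan is to use the fact that every Schur function indexed by a partition with at most one row is a complete homogeneous symmetric function, $s_{(m)}=h_m$, and then to evaluate the two factors separately as power series in $q$. First I would record the principal specialization of the infinite factor, $s_{(m)}\bigl(1,q^{2k+3},q^{2(2k+3)},\dots\bigr)=h_m\bigl(1,Q,Q^2,\dots\bigr)=1/(Q;Q)_m$ with $Q=q^{2k+3}$. This is just the $q$-binomial theorem applied to the generating function $\sum_m h_m(1,Q,Q^2,\dots)\,t^m=\prod_{l\ge 0}(1-tQ^l)^{-1}=(t;Q)_\infty^{-1}$, read off at $t^m$.

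The heart of the argument is a partial-fraction expansion of the finite factor. Writing the generating function $\sum_m s_{(m)}(y_1,\dots,y_{2k})\,t^m=\prod_{j\in S}(1-tq^j)^{-1}$, where $S=\{1,\dots,2k+2\}\setminus\{i,2k+3-i\}$ is the set of exponents of the $2k$ variables, I would decompose this product of simple poles in the variable $t$. Since the $q^j$ for $j\in S$ are distinct, the coefficient of $(1-tq^p)^{-1}$ is the residue $\prod_{j\in S,\,j\ne p}(1-q^{j-p})^{-1}$, which is exactly $A_p$. Extracting the coefficient of $t^m$ then yields the clean formula $s_{(m)}(y_1,\dots,y_{2k})=\sum_{p\in S}A_p\,q^{mp}$.

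With both factors in hand, I would substitute and interchange the (for $|q|<1$ absolutely convergent) sums over $m$ and $p$:
\[
\sum_{m\ge 0}\frac{s_{(m)}(y_1,\dots,y_{2k})}{(Q;Q)_m}=\sum_{p\in S}A_p\sum_{m\ge 0}\frac{(q^p)^m}{(Q;Q)_m}.
\]
A second application of the $q$-binomial theorem, now with $t=q^p$ and base $Q=q^{2k+3}$, evaluates the inner sum as $(q^p;q^{2k+3})_\infty^{-1}$, and collecting terms produces precisely $\sum_{p\in S}A_p/(q^p;q^{2k+3})_\infty$, which is the claimed right-hand side.

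There is no serious obstacle here: the only content beyond two uses of the $q$-binomial theorem is the partial-fraction step, and the one place to be vigilant is matching the residue $\prod_{j\in S,\,j\ne p}(1-q^{j-p})^{-1}$ against the product defining $A_p$ (i.e.\ confirming that the exclusions $j\ne p,i,2k+3-i$ coincide) and noting that $p\ge 1$ for every $p\in S$, so that each inner sum converges to an honest infinite product.
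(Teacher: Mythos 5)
Your proposal is correct and follows essentially the same route as the paper's own proof: the principal specialization $s_{(N)}(1,Q,Q^2,\dots)=1/(Q;Q)_N$, the partial-fraction expansion of $\prod_{j\in S}(1-tq^j)^{-1}$ in $t$ giving $s_{(N)}(y)=\sum_{p\in S}A_pq^{pN}$, and the Euler summation $\sum_N q^{pN}/(Q;Q)_N=1/(q^p;Q)_\infty$. The only difference is that you make explicit the residue computation and the convergence justification, which the paper leaves implicit.
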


\begin{proof} If $\lambda=N$ has a single part we have
\begin{gather*}
 s_\lambda\bigl(1,q^{2k+3},q^{2(2k+3)}, q^{3(2k+3)},\dots\bigr) =\frac{1}{\bigl(q^{2k+3};q^{2k+3}\bigr)_N},\\
s_\lambda(y_1,\dots, y_{2k})= {\text{ the coefficient of $t^N$ in }}
\prod_{\underset{j\neq i,2k+3-i}{j=1}}^{2k+2} \bigl(1-tq^j\bigr)^{-1}.
\end{gather*}

By partial fractions on $t$ we see that the $A_p$ satisfy
\[
\prod_{\underset{j\neq i,2k+3-i}{j=1}}^{2k+2} \bigl(1-tq^j\bigr)^{-1} =
\sum_{\underset {p\neq i,2k+3-i}{p=1}}^{2k+2} A_p(1-tq^p)^{-1},
\]
so that
\[
s_\lambda(y_1,\dots, y_{2k}) = \sum_{\underset{p\neq i,2k+3-i}{p=1}}^{2k+2}
A_p q^{pN}.
\]
We then use
\begin{gather*}
\sum_{N=0}^\infty \frac{q^{pN}}{\bigl(q^{2k+3};q^{2k+3}\bigr)_N}= \frac{1}{\bigl(q^p;q^{2k+3}\bigr)_\infty}.
\end{gather*}
to complete the proof.
\end{proof}

Note that Theorem~\ref{genthm} for $k=1$ and $i=2$ is \eqref{RR5-1row}.

\begin{spec}\label{spec1}
The $1$ row and at most $2k$ rows cases are sums of products
in the Rogers--Ramanujan infinite product. Perhaps this works for
any number of rows $R\le 2k.$ Is
\begin{gather*}
\sum_{\lambda{\text { at most $R$ rows}} }
s_\lambda(x_1,\dots, x_n,\dots) s_\lambda (y_1,\dots, y_{2k})
\end{gather*}
a sum of a product of $R$ infinite products, each of the form,
$
1/\bigl(q^j;q^{2k+3}\bigr)_\infty$, $j\not\equiv \pm i$, $0 \bmod {2k+3}$,
with coefficients which are rational functions in $q$?
\end{spec}

Note that Speculation~\ref{spec1} holds for $R=1$ and $R=2k.$

\section{Other symmetric function Cauchy identities}\label{sec5}

Michael Schlosser has pointed out that the dual Cauchy identity
\[
\sum_{\lambda} s_\lambda(x_1,\dots, x_n) s_{\lambda'}(y_1,\dots, y_m)=
\prod_{i=1}^n\prod_{j=1}^m (1+x_iy_j)
\]
can be similarly used with $m=2$ and
$
(x_1,x_2,\dots, x_n)=\bigl(1,q^3,\dots ,q^{3(n-1)}\bigr)$, $ (y_1,y_2)=\bigl(-q,-q^2\bigr)$
to obtain the Borwein product $\bigl(q^1;q^3\bigr)_n \bigl(q^2,q^3\bigr)_n.$
Again column strict tableaux could be used to approach
that problem, see~\cite{And}.

There are other Cauchy identities. If $x$ and $y$ are arbitrary sets of variables,
the Macdonald polynomials satisfy
\begin{gather}\label{Cauchy-Mac}
\sum_{\lambda} P_\lambda(x;q,t) Q_\lambda(y;q,t)=
\prod_{i,j} \frac{(tx_iy_j;q)_\infty}{(x_iy_j;q)_\infty}.
\end{gather}

Special cases of \eqref{Cauchy-Mac}, restricted by rows, have been
extensively used by Rains and S. Ole Warnaar \cite{RW}.

\subsection*{Acknowledgements} The author would like to thank the anonymous referees whose
suggestions substantially improved this paper.

\pdfbookmark[1]{References}{ref}
\LastPageEnding

\end{document}